\begin{document}

\newtheorem{theorem}{Theorem}
\newtheorem{lemma}[theorem]{Lemma}
\newtheorem{corollary}[theorem]{Corollary}
\newtheorem{proposition}[theorem]{Proposition}
\newtheorem{conjecture}[theorem]{Conjecture}

\theoremstyle{remark}
\newtheorem*{remark}{Remark}

\def\C{{\mathbb C}}
\def\F{{\mathbb F}}
\def\N{{\mathbb N}}
\def\Z{{\mathbb Z}}
\def\Q{{\mathbb Q}}
\def\G{{\mathcal G}}

\def\({\left(}
\def\){\right)}
\def\eps{\varepsilon}
\def\ind{\operatorname{ind}}

\def\spmod#1{\,(\text{\rm mod\,}#1)}
\def\sbmod#1{\,\text{\rm mod\,}#1}

%%%%%%%%%%%%% Top Matter %%%%%%%%%%%%%

\title{On the size of quotient of two subsets of positive integers.}

\author{\sc Yurii Shteinikov  
}

\date{}
\maketitle

\maketitle
%%%%%%%%%%%%%% Main Matter %%%%%%%%%%%%%%

\begin{abstract} We obtain non-trivial  lower bound for the set $A/A$, where $A$ is a subset of the interval [1, $Q$].
  
\end{abstract}

\begin{flushright}

{\em In a memory of }\\

{\em Anatoly Alekseevich Karatsuba}\\

{\em and Andrey Borisovich Shidlovsky}

\end{flushright}

%{\author{ Yurii N. Shteinikov, Steklov  Institute of Mathematics, yuriisht@yandex.ru}\vspace{1 ex}

\section{Introduction}
Let $A,B$ be  subsets of integers of the interval [1,$Q$], $|A|$ will denote the cardinality of finite set $A$. The sets $AB$ and $A/B$ are called the product and quotient of two sets  $A,B$ and are defined as 
$$AB= \{ab: a \in A, b \in B \}, A/B= \Bigl\{\frac{a}{b}: a \in A,  b \in B, b \neq 0 \Bigr\}.$$

Recall that the multiplicative energy  $E(A,B)$ of two sets $A,B$  is $$E(A,B)=|\{a_{1}b_{1}=a_{2}b_{2}: a_{1},a_{2} \in A; b_{1},b_{2} \in B \}|.$$ When the sets are equal, $A=B$ we will simply write $E(A)$ instead of $E(A,A)$. 

We note that using good estimates of $E(A,B)$ one can deduce non-trivial lower estimates of the size of $AB$ but not vice versa -- the following well-known inequality  which is due to Shnirel'man~\cite{lg}, which can be also found in~\cite{TV}.

\begin{equation} \label{shn}
|AB|, |A/B| \geq \frac{|A|^{2}|B|^{2}}{E(A,B)}.
\end{equation}

Throughout the paper   $\tau(n)$ (usual notation) is the number of  divisors of $n$. Recall the well-known estimate of $\tau(n)$  which can be found in the book  \cite{prach}, Theorem 5.2, Kapitel 1.

\begin{equation}
\tau(n) \leq 2^{\frac{(1+o(1))\log n}{\log \log n}}, n \rightarrow \infty.
\end{equation}

Using the above estimate  it is easy to prove the following result. 

\textit{For any finite set $A \subset \mathbb{N}$ such that  $ a \in A \Rightarrow a \leq Q,$ we have the following estimates}

\begin{equation}
|AA| \geq |A|^{2}\exp \Bigl\{ (-2 \log 2 +o(1)) \frac{  \log Q}{\log\log Q} \Bigr\}, Q\rightarrow \infty;
\end{equation}

\begin{equation}
E(A)\leq |A|^{2} \exp \Bigl\{ (2 \log 2 +o(1)) \frac{  \log Q}{\log\log Q} \Bigr\} Q\rightarrow \infty.
\end{equation}

The constant $2\log 2$ in (2),(3) can not be improved, see it in the paper  \cite{sht}  for example. 

Using (5) one can obtain that 
\begin{equation}
|A/A| \geq |A|^{2} \exp \Bigl\{ (-2 \log 2 +o(1)) \frac{  \log Q}{\log\log Q} \Bigr\} Q\rightarrow \infty.
\end{equation}

This bound  cannot be improved very much in general, except for the  constant~$-2 \log 2$, see it for example in \cite{crr}.

But there is  question that was posed in the paper \cite{II}  relating to this, -- we repeat the formulation  of it bellow.

\textbf{Question.} Is it possible to improve the coefficient $-2 \log 2$ in (4)?

The purpose of this note is to give positive answer to this question. So we formulate the main result of this  paper.

\begin{theorem} \label{t-1}

There is an absolute constant $\gamma>0$, such that  if $A,B \subseteq [1,Q]$ then we have the following estimate
$$|A/B|\geq |A||B|  \exp\Bigl\{(- 2 \log2 + \gamma + o(1)) \frac{\log Q}{{\log{\log Q}}}\Bigl\} ,  Q\rightarrow \infty.$$
One can take $\gamma=0.098$.
 
\end{theorem}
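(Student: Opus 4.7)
By Shnirel'man's inequality \eqref{shn}, it suffices to prove the matching upper bound
\begin{equation*}
E(A,B) \le |A||B|\exp\Bigl\{(2\log 2 - \gamma + o(1))\frac{\log Q}{\log\log Q}\Bigr\}.
\end{equation*}
Writing $E(A,B)=\sum_{n\le Q^2} r(n)^2$ with $r(n)=|\{(a,b)\in A\times B:ab=n\}|$, the crude argument bounds the sum by $(\max_n r(n))\cdot\sum_n r(n)\le T\cdot|A||B|$ with $T=\tau_{\max}(Q^2)=\exp((2\log 2+o(1))\log Q/\log\log Q)$, losing the entire factor $T$ and thereby producing the known exponent $-2\log 2$. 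The whole point is to save a fixed positive power of $T$ by exploiting that $n$'s achieving $r(n)\approx T$ are very few.

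My plan is a dyadic split $E(A,B)\le 4\sum_{k\ge 0} 4^k N_k$ with $N_k=|\{n\le Q^2 : r(n)\ge 2^k\}|$, bounded in two ways: (i) the trivial $N_k\le|A||B|/2^k$ coming from $\sum_n r(n)=|A||B|$, and (ii) the divisor bound $N_k\le|\{n\le Q^2:\tau(n)\ge 2^k\}|$, which by Hardy--Ramanujan-type estimates on integers with many prime factors decays essentially like $Q^2(\log\log Q)^k/k!$ and drops well below $|A||B|/2^k$ as $k$ approaches $\log_2 T$. Taking the pointwise minimum, splitting the dyadic sum at the crossover $k_0$, and optimizing in $k_0$ yields an upper bound of the shape $|A||B|T^{1-\delta+o(1)}+Q^{2\delta+o(1)}$, which is of the desired form whenever $|A||B|$ exceeds a modest positive power of $Q$. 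I also expect that using the sharper inequality $r(n)\le|\{d\mid n:d\in[n/Q,Q]\}|$ -- i.e.\ divisors of $n$ concentrated in a specific dyadic region, which by Ford--Tenenbaum bounds are substantially fewer than $\tau(n)$ -- would give a more flexible and conceivably sharper control on $N_k$.

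The small-$|A||B|$ regime is covered by the trivial inequality $|A/B|\ge\max(|A|,|B|)\ge\sqrt{|A||B|}$, which already dominates the target bound whenever $|A||B|\le T^{2(1-\gamma/(2\log 2))}$; an intermediate range, where neither the trivial nor the energy estimate is automatically strong enough, can be bridged via a Pl\"unnecke--Ruzsa style argument passing from $|A/B|$ to $|AA/BB|$, or by iterating the energy bound. The main obstacle is quantitative: obtaining some absolute $\gamma>0$ is comparatively soft, but extracting the explicit value $\gamma=0.098$ requires a careful bookkeeping of constants in the large-values estimate for $\tau$ near the extremal prime-factor count $k\approx 2\log Q/\log\log Q$, together with a precise optimization of the cutoff $k_0$ and of the trade-off between the two bounds on $N_k$ across the dyadic sum.
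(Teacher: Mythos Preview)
Your opening reduction is the problem: the inequality
\[
E(A,B)\le|A||B|\exp\Bigl\{(2\log 2-\gamma+o(1))\tfrac{\log Q}{\log\log Q}\Bigr\}
\]
is \emph{false} in general --- the paper notes explicitly (just after displaying the energy bound) that the constant $2\log 2$ there is sharp, citing~\cite{sht}. So ``it suffices to prove the matching upper bound'' is not a valid reduction. You later split off small $|A||B|$ via $|A/B|\ge\sqrt{|A||B|}$, but you never delimit for which $A,B$ the improved energy inequality is meant to hold, and the Hardy--Ramanujan heuristic you invoke for $N_k\le|\{n\le Q^2:\tau(n)\ge 2^k\}|$ is quoted in the wrong regime: the relevant $k$ is of order $\log Q/\log\log Q$, far beyond the normal range $k\sim\log\log Q$ where the $(\log\log Q)^k/k!$ asymptotic applies. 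The ``intermediate range'' and the explicit constant $0.098$ are left to hopeful remarks about Pl\"unnecke--Ruzsa and Ford--Tenenbaum, with no argument given.

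The paper's proof does \emph{not} try to improve the energy bound at all. It writes $E(A,B)=|A||B|\exp\{(2\log 2-c)\log Q/\log\log Q\}$ and runs two estimates in parallel. If $c$ is large, Shnirel'man's inequality~(\ref{shn}) already yields the theorem. If $c$ is small, one passes by dyadic pigeonholing to a set $G\subset A\times B$ of pairs whose products $z=ab$ have $\tau(z)$ within a factor $T^{o(1)}$ of the maximum; the key structural input (Lemma~\ref{l-4}) is that such $z$ have small squarefull part, namely the largest $m$ with $m^2\mid z$ satisfies $m\le Q^{\delta}$ with $\delta\le c/(2\log 2-\log 3)$. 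Hence every pair $(a,b)\in G$ has $\gcd(a,b)\le Q^{\delta+o(1)}$, and a direct second-moment count of solutions to $a_1/b_1=a_2/b_2$ under this gcd constraint (via the restricted divisor bound of Lemma~\ref{l-tau_nemu}) produces a lower bound for $|A/B|$ that is strong precisely when $c$ is small. Equating the two bounds gives $\gamma\approx 0.098$. The mechanism you are missing is this ``large energy $\Rightarrow$ small $\gcd$'' step, which bypasses the energy inequality altogether and is what makes the argument succeed even when the energy is genuinely at its extremal value.
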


The notation $A\lesssim B$ in this paper denotes that $$A<B \exp\Bigl\{( o(1)) \frac{\log Q}{{\log{\log Q}}}\Bigl\} ,  Q\rightarrow \infty.$$

The paper is organized as follows: in the next section we formulate some preliminary statements. In the third section we give the proof of Theorem \ref{t-1}. The last section contains some final comments about this result.
\section{Preparations and preliminary results}

We need some definitions and preliminary lemmas.
We begin with the smooth numbers. For positive integer $n$ let $P^{+}(n)$ denotes the maximal prime divisor of $n$, and $P^{+}(1)=1$. For $x\geq y \geq 2$ let 

$$\psi(x,y)=|\{n \leq x: P^{+}(n)\leq y\}|.$$

We need some one upper estimate for $\psi(x,y)$, which can be found in \cite{ten}, Theorem 1.4, which is presented bellow.

\begin{lemma} \label{l-2}

Uniformly  for $x\geq y \geq 2$,  we have 

$$\log \psi(x,y) = Z\Bigl\{ 1 + O(\frac{1}{\log y} + \frac{1}{\log \log x}) \Bigr\}, $$
 where $$Z=Z(x,y)= \frac{\log x}{\log y} \log (1 + \frac{y}{\log x}) + \frac{y}{\log y}\log (1+ \frac{\log x}{y}).$$
\end{lemma}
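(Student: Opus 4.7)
The plan is to prove the lemma by the Rankin--Hildebrand--Tenenbaum saddle point method, producing matching upper and lower bounds for $\log \psi(x,y)$ whose common main term is $Z(x,y)$. I will sketch the upper and lower bounds separately and then comment on where uniformity is delicate.

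For the upper bound I would use Rankin's trick: for any $\sigma \in (0,1]$,
$$\psi(x,y) \leq x^{\sigma}\sum_{P^{+}(n)\leq y} n^{-\sigma} = x^{\sigma}\prod_{p\leq y}(1-p^{-\sigma})^{-1},$$
because $n\leq x$ forces $(x/n)^{\sigma}\geq 1$. Taking logarithms,
$$\log \psi(x,y) \leq \sigma\log x + \sum_{p\leq y}\log(1-p^{-\sigma})^{-1}.$$
The next step is to optimize in $\sigma$: differentiating produces the saddle point equation
$$\log x = \sum_{p\leq y}\frac{\log p}{p^{\sigma}-1},$$
whose unique solution I would denote $\alpha = \alpha(x,y)$. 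Using the prime number theorem via partial summation (Chebyshev-type input already suffices for the relative error $O(1/\log y + 1/\log\log x)$) one evaluates both sides: the saddle point equation pins down $\alpha$, and substituting $\alpha$ back into $\alpha\log x + \sum_{p\leq y}\log(1-p^{-\alpha})^{-1}$ yields, after algebraic simplification, exactly $Z(x,y)\,(1+O(1/\log y + 1/\log\log x))$. Crucially, the two summands of $Z$ arise naturally from the two contributions $\alpha\log x$ and $\sum_{p\leq y}\log(1-p^{-\alpha})^{-1}$ respectively.

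For a matching lower bound I would count $y$-smooth integers $n\leq x$ directly. Writing $n = \prod_{p\leq y}p^{a_{p}}$ identifies them with lattice points $(a_{p})_{p\leq y} \in \Z_{\geq 0}^{\pi(y)}$ satisfying $\sum_{p\leq y}a_{p}\log p \leq \log x$. A volume estimate on this simplex, combined with a Laplace-method evaluation centred at the same saddle point $\alpha$, delivers $\log \psi(x,y) \geq Z(x,y)\,(1 - O(1/\log y + 1/\log\log x))$, matching the upper bound.

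The main obstacle is uniformity across the entire range $x\geq y\geq 2$. In the Hildebrand--Tenenbaum regime $y \gg (\log x)^{1+\eps}$ the first summand of $Z$ dominates, the parameter $u = \log x/\log y$ is the right variable, and $\alpha$ lies strictly below $1$ (Dickman-type behaviour); in the Ennola regime $y = O(\log x)$ the second summand of $Z$ takes over, the combinatorial count of lattice points is what drives the asymptotic, and $\alpha$ is close to $1$. The single formula $Z$ is engineered to interpolate between these two regimes, and controlling the saddle point and the prime sums uniformly through the transition is exactly where the partial-summation bookkeeping becomes delicate. A detailed execution is carried out in Theorem 1.4 of \cite{ten}, which we are invoking here as a black box.
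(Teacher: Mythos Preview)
The paper does not prove this lemma at all: it is stated as a quotation of Theorem~1.4 of \cite{ten} with no argument given. Your proposal likewise ends by invoking that same reference as a black box, so the two agree on what actually establishes the result. The saddle-point sketch you supply is a correct outline of the Hildebrand--Tenenbaum argument and is more informative than the bare citation in the paper, but for the purposes of this paper it is strictly extra detail rather than a different approach.
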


Our second lemma gives some upper bound for the number of divisors of positive integer with small redical. Probably it was  known before and we do not pretend on this fact.

\begin{lemma} \label{l-3}

%The number of divisors $d$ of positive integer $n\leq Q$ with the restriction $d\leq z$ 
There exists a function $C(\varepsilon)>0,$ with  $C(\varepsilon) \longrightarrow 0, $ if $\varepsilon \longrightarrow 0$ and with the following property.  If $n\leq Q, rad (n) \leq Q^{\varepsilon}$ , then  $$\tau(n) \lesssim  \exp\Bigl\{(C(\varepsilon)) \frac{\log Q}{{\log{\log Q}}}\Bigl\} ,  Q\rightarrow \infty.$$  
\end{lemma}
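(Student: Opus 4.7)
The plan is to combine a Chebyshev-type estimate for the number $k=\omega(n)$ of distinct prime divisors of $n$ (which is forced to be small by the radical hypothesis) with a Lagrange/entropy bound on $\prod(a_i+1)$; a key cancellation of a $\log\log\log Q$ factor will come from the fact that the primes $p_i\mid n$ are distinct.

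First I write $n=p_1^{a_1}\cdots p_k^{a_k}$ with $p_1<\cdots<p_k$ and each $a_i\geq 1$, and let $q_i$ denote the $i$-th prime. From $p_1\cdots p_k=\mathrm{rad}(n)\leq Q^\varepsilon$ together with $p_i\geq q_i$ and the Chebyshev-type bound $\log(q_1\cdots q_k)=(1+o(1))k\log k$, I obtain
$$k\leq (1+o(1))\,\varepsilon\,\frac{\log Q}{\log\log Q}.$$
Using $q_i=(1+o(1))i\log i$ together with a standard integral comparison I also get
$$\sum_{i=1}^k\log\log p_i\geq\sum_{i=1}^k\log\log q_i\geq(1+o(1))\,k\log\log k.$$

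To bound $\tau(n)=\prod(a_i+1)$ I apply the elementary inequality $\log x\leq x/t-1+\log t$ (valid for all $x,t>0$) with $x=a_i+1$ and $t_i=1/(\lambda\log p_i)$, for a parameter $\lambda>0$ to be optimized. Summing over $i$ and using
$$\sum_{i=1}^k(a_i+1)\log p_i=\log n+\log\mathrm{rad}(n)\leq(1+\varepsilon)\log Q,$$
I arrive at
$$\log\tau(n)\leq\lambda(1+\varepsilon)\log Q-k-k\log\lambda-\sum_{i=1}^k\log\log p_i.$$
Choosing $\lambda=k/((1+\varepsilon)\log Q)$ then yields the clean bound
$$\log\tau(n)\leq k\log\frac{(1+\varepsilon)\log Q}{k}-\sum_{i=1}^k\log\log p_i.$$

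Combining the two estimates and writing $k=\alpha\log Q/\log\log Q$ with $\alpha\leq(1+o(1))\varepsilon$, the two terms $\log(\log Q/k)$ and $\log\log k$ each equal $\log\log\log Q+O_{\varepsilon}(1)$, so the divergent $\log\log\log Q$ cancels and
$$\log\tau(n)\leq\alpha\log\!\Bigl(\frac{1+\varepsilon}{\alpha}\Bigr)\frac{\log Q}{\log\log Q}+o\!\Bigl(\frac{\log Q}{\log\log Q}\Bigr).$$
The map $\alpha\mapsto\alpha\log((1+\varepsilon)/\alpha)$ is increasing on $(0,\varepsilon]$ for small $\varepsilon$, so its supremum is attained at $\alpha=\varepsilon$, producing $C(\varepsilon)=\varepsilon\log(1/\varepsilon)+O(\varepsilon)\to 0$ as $\varepsilon\to 0$, as required. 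The main difficulty is precisely the $\log\log\log Q$ cancellation: the Lagrange bound alone would leave an extra $\log\log\log Q$ factor, so it is essential to exploit the distinctness of the $p_i$ (not merely $p_i\geq 2$) in order to produce the compensating $k\log\log k$ lower bound on $\sum\log\log p_i$.
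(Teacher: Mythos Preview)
Your argument is correct and takes a genuinely different route from the paper's. The paper observes that if $n=p_1^{\alpha_1}\cdots p_s^{\alpha_s}$ with $p_1<\cdots<p_s$, then the ``compression'' map $p_1^{t_1}\cdots p_s^{t_s}\mapsto q_1^{t_1}\cdots q_s^{t_s}$ (replacing each $p_i$ by the $i$-th prime $q_i$) injects the divisors of $n$ into the set of $y$-smooth integers $\leq Q$, where $y=q_s\leq(\varepsilon+o(1))\log Q$ by the Prime Number Theorem; the bound then falls out of the Hildebrand--Tenenbaum estimate for $\psi(Q,y)$ (Lemma~\ref{l-2}). You instead bound $\prod(a_i+1)$ directly by a Lagrange/entropy optimization under the constraint $\sum(a_i+1)\log p_i\leq(1+\varepsilon)\log Q$, and then use the distinctness of the $p_i$ to produce the compensating $k\log\log k$ term. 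Both approaches give $C(\varepsilon)=\varepsilon\log(1/\varepsilon)+O(\varepsilon)$; the paper's is shorter because it outsources the optimization to the smooth-number count, while yours is self-contained and avoids that black box entirely.

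One small point to tighten: your assertion that $\log(\log Q/k)$ and $\log\log k$ are each $\log\log\log Q+O_\varepsilon(1)$ is not uniform in $\alpha$ --- for very small $\alpha$ the first term can be as large as $\log\log Q$. This is harmless, since for $k\leq\log Q/(\log\log Q)^2$ the crude bound $\log\tau(n)\leq k\log((1+\varepsilon)\log Q/k)+O(k)$ already gives $o(\log Q/\log\log Q)$, while for larger $k$ one genuinely has $\log\log k=\log\log\log Q+o(1)$ and hence $\log(\log Q/k)-\log\log k=\log((1+\varepsilon)/\alpha)+o(1)$, which is exactly what you need. Alternatively, one can simply note that your upper bound $k\log((1+\varepsilon)\log Q/k)-\sum_i\log\log q_i$ is increasing in $k$ over the whole admissible range (for small $\varepsilon$), so it suffices to evaluate it at $k=k_{\max}\sim\varepsilon\log Q/\log\log Q$, where the cancellation is clean.
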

\begin{proof}
Let $n=p_{1}^{\alpha_{1}}\ldots p_{s}^{\alpha_{s}}$  is the prime decomposition of $n$ and $p_{1}<p_{2}< \ldots <p_{s}.$  Consider the map on the set of divisors of $n$ $$\pi: p_{1}^{t_{1}}\ldots p_{s}^{l_{s}} \rightarrow  p_{(1)}^{t_{1}}\ldots p_{(s)}^{l_{s}},$$ 
where $p_{(i)}$ - is the $i$ - ordered prime number,--  $$p_{(1)}=2, p_{(2)}=3, p_{(3)}=5,...$$ By Prime Number Theorem if $rad(n) \leq Q^{\varepsilon} $ then $$p_{(s)} < (\varepsilon+o(1)) \log Q, Q\rightarrow \infty.$$ If $d|n $ then $\pi (d)\leq Q$. So the number of such divisors $d$ does not exceed $$\psi(Q, (\varepsilon+o(1) \log Q ).$$ Using Lemma \ref{l-2} with some easy computations we get the desired property for the function $C(\varepsilon)$.  This completes the proof of Lemma \ref{l-3}.
\end{proof} 

%Using the previous two lemmas we obtain the following corollary.
%\begin{corollary} \label{c1} 
%The number of divisors $d$ of positive integer $n\leq Q$ with the restriction $d\leq z$, $z=Q^{o(1)},$ does not exceed $$\exp\Bigl\{ o(1) \frac{\log Q}{{\log{\log Q}}}\Bigl\} ,  Q\rightarrow \infty.$$

%\end{corollary}
Let $\tau(n,z)$ denotes the number of divisors of $n$ which are less or equal to $z$. In other words 
$$\tau(n,z)=|\{ d: d|n, d \leq z \}|.$$

 The next proposition we present in the following lemma. 
\begin{lemma} \label{l-tau_mu}
%For any integer $n>1$ we have $\frac{\log \tau(n^{2})}{\log \tau(n)}\leq \frac{\log 3}{\log 2}.$
Let $n\leq Q, \mu(n) \neq 0 $, $z\leq Q^{\delta}$ and $\delta \in (0,1/2]$. Then we have the following estimate $$\tau(n,z) \leq \exp\Bigl\{(\delta \log (\frac{1}{\delta}) + (1-\delta) \log (\frac{1}{1-\delta})+o(1)) \frac{\log Q}{{\log{\log Q}}}\Bigl\} ,  Q\rightarrow \infty. $$
\end{lemma}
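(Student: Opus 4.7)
The strategy is Rankin's trick combined with the prime number theorem. Since $\mu(n)\neq 0$, $n$ is squarefree, hence $\sum_{d\mid n}d^{-s}=\prod_{p\mid n}(1+p^{-s})$ for every $s>0$. Using the elementary inequality $\mathbf{1}[d\leq z]\leq (z/d)^{s}$, valid for $s>0$ and $d\geq 1$, one deduces
$$\tau(n,z)\;\leq\;z^{s}\prod_{p\mid n}(1+p^{-s}).$$
The parameter $s$ will be taken of the form $s=a/\log\log Q$ with $a>0$, to be optimised at the very end.

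To bound the product I would reduce to a worst-case shape of $n$. Since $\log(1+p^{-s})$ is a decreasing function of $p$, replacing the prime divisors of $n$ by the smallest primes of the same cardinality preserves $n\leq Q$ and only increases the product; enlarging afterwards the set of prime divisors, while keeping $n\leq Q$, increases it further. Hence it suffices to handle $n=\prod_{p\leq y}p$, where $y$ is maximal with $\theta(y)\leq\log Q$. By Chebyshev/PNT one has $y=(1+o(1))\log Q$.

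I would then estimate the prime sum $\sum_{p\leq y}\log(1+p^{-s})$ by an integral via partial summation and PNT,
$$\sum_{p\leq y}\log(1+p^{-s})=(1+o(1))\int_{2}^{y}\log(1+t^{-s})\,\frac{dt}{\log t},$$
and, after the substitution $t=y^{u}$ (note $s\log y=a$), rewrite the integral as
$$(1+o(1))\int_{\log 2/\log y}^{1}\log(1+e^{-au})\,\frac{y^{u}}{u}\,du.$$
The factor $y^{u}$ concentrates the mass near $u=1$. Splitting at a small $\epsilon>0$, the contribution from $u\leq\epsilon$ is bounded by $O(y^{\epsilon}\log\log y)=o(y/\log y)$, while on $[\epsilon,1]$ the substitution $v=(1-u)\log y$ together with dominated convergence yields the Laplace-type asymptotic
$$\sum_{p\leq y}\log(1+p^{-s})=(1+o(1))\log(1+e^{-a})\cdot\frac{\log Q}{\log\log Q}.$$

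Combining this with $s\log z\leq a\delta\log Q/\log\log Q$ I obtain
$$\log\tau(n,z)\;\leq\;\frac{\log Q}{\log\log Q}\bigl(a\delta+\log(1+e^{-a})+o(1)\bigr).$$
Minimising the bracketed quantity over $a\geq 0$, the critical point is $a=\log((1-\delta)/\delta)$, which is $\geq 0$ exactly because $\delta\leq 1/2$; plugging in gives $a\delta+\log(1+e^{-a})=\delta\log(1/\delta)+(1-\delta)\log(1/(1-\delta))$, which is the exponent of the lemma. The main technical obstacle is the third paragraph: one must justify the passage from the prime sum to the integral, and then from the integral to its leading term, with errors that are genuinely $o(\log Q/\log\log Q)$; beyond that, the remainder is a routine computation and a calculus optimisation.
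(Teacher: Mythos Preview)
Your proof is correct, but it takes a genuinely different route from the paper's argument.

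The paper's proof is purely combinatorial. Since $n\leq Q$ is squarefree, it has at most $(1+o(1))\frac{\log Q}{\log\log Q}$ prime factors; any divisor $d\leq Q^{\delta}$, being itself squarefree, has at most $(\delta+o(1))\frac{\log Q}{\log\log Q}$ prime factors. Hence $\tau(n,z)$ is bounded by a partial sum of binomial coefficients $\sum_{k\leq \delta m}\binom{m}{k}$ with $m\sim\frac{\log Q}{\log\log Q}$, and the standard entropy asymptotic $\binom{m}{\delta m}\sim\exp\{H(\delta)m\}$ with $H(\delta)=\delta\log\frac{1}{\delta}+(1-\delta)\log\frac{1}{1-\delta}$ finishes the job in two lines.

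Your approach via Rankin's inequality $\tau(n,z)\leq z^{s}\prod_{p\mid n}(1+p^{-s})$, reduction to the primorial, and a Laplace-type evaluation of $\sum_{p\leq y}\log(1+p^{-s})$ arrives at the same exponent after optimising $s$. The steps are sound: the worst-case reduction is monotone as you describe; the partial-summation error is controlled because $|f'(t)|\leq s/t$ and $s=O(1/\log\log Q)$; and the concentration of the $y^{u}/u$ integral near $u=1$ is handled correctly by your split and the substitution $v=(1-u)\log y$. The boundary case $\delta=1/2$ (where the optimal $a$ is $0$) can be absorbed by continuity or by quoting the unrestricted divisor bound directly.

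In terms of trade-offs: the paper's argument is shorter and entirely elementary, needing only the primorial estimate and the binomial asymptotic. Your analytic approach is longer but more flexible; in particular, Rankin's trick does not require $\mu(n)\neq 0$, and with the factor $\prod_{p\mid n}(1-p^{-s})^{-1}$ in place of $\prod_{p\mid n}(1+p^{-s})$ it would give a direct route to the non-squarefree extension (the paper's next lemma), bypassing the multi-step decomposition used there.
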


\begin{proof} Consider any divisor $d$ of $n$ and its prime decomposition: $d=p_{1}\ldots p_{s}$. It is easy to see, that $s\leq \frac{(\delta + o(1)) \log Q}{{\log{\log Q}}}, Q~\rightarrow~\infty.$ The number $n$ has at most $\frac{ (1+o(1)) \log Q}{{\log{\log Q}}}, Q\rightarrow \infty$ different prime divisors. Doing some  computations together with the asymptotic expression for binomial coefficient -- $${m\choose \delta m}\sim \exp\Bigl\{(\delta \log (\frac{1}{\delta}) + (1-\delta) \log (\frac{1}{1-\delta})+o(1)) m \Bigl\} ,  m\rightarrow \infty$$ we get the desired bound.

%Let $n=p_{1}\ldots p_{s}$. As in the proof of Lemma \ref{l-3}, consider the map on the set of divisors of $n$ $$\pi: p_{1}\ldots p_{s} \rightarrow  p_{(1)}\ldots p_{(s)},$$ 
%where $p_{(i)}$ - is the $i$ - ordered prime number,--  $$p_{(1)}=2, p_{(2)}=3, p_{(3)}=5,...$$ By Prime Number Theorem if $n\leq Q $ then $$p_{(s)} < (1+o(1)) \log Q, Q\rightarrow \infty.$$ If $d|n, d\leq z$ then $\pi (d)\leq z$ and $$
\end{proof}

Let $z= Q^{\delta}$. In the notations above we in fact have shown that $$\tau(n,z)\lesssim {m\choose \delta' m}$$ where  $$m=[\frac{\log Q}{{\log{\log Q}}}], \delta'= \min (\delta, \frac{1}{2}).$$

In fact the condition $\mu(n)\neq 0$ in the previous lemma can be removed. 
\begin{lemma} \label{l-tau_nemu}
%For any integer $n>1$ we have $\frac{\log \tau(n^{2})}{\log \tau(n)}\leq \frac{\log 3}{\log 2}.$
Let $n\leq Q $, $z\leq Q^{\delta}$. Then we have the following estimate $$\tau(n,z) \lesssim \exp\Bigl\{(\delta' \log (\frac{1}{\delta'}) + (1-\delta') \log (\frac{1}{1-\delta'})) \frac{\log Q}{{\log{\log Q}}}\Bigl\} ,  Q\rightarrow \infty, $$ where $\delta'= \min (\delta, \frac{1}{2})$.
\end{lemma}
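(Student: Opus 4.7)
The plan is to decompose $n = ab$ where $a = \prod_{p:\, v_p(n)=1} p$ is squarefree and $b = n/a$ is powerful (every prime factor has exponent $\geq 2$), with $\gcd(a,b)=1$. Every divisor $d$ of $n$ factors uniquely as $d = d_a d_b$ with $d_a\mid a$ and $d_b\mid b$, so
$$
\tau(n,z) \;=\; \sum_{\substack{d_b\mid b\\ d_b\le z}} \tau(a,\, z/d_b).
$$
Since $a\le Q$ is squarefree, Lemma~\ref{l-tau_mu} controls each $\tau(a, z/d_b)$. Set $m = \log Q/\log\log Q$ and fix a slowly decaying threshold $\varepsilon = \varepsilon(Q) \to 0$, for instance $\varepsilon = 1/\log\log\log Q$.

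In the first case, $b \le Q^{\varepsilon}$. Since $b$ is powerful one has $\mathrm{rad}(b) \le \sqrt{b} \le Q^{\varepsilon/2}$, so Lemma~\ref{l-3} yields $\tau(b) \lesssim \exp\{C(\varepsilon/2)\,m\}$ with $C(\varepsilon/2) = o(1)$. The crude bound $\tau(n,z) \le \tau(b)\,\tau(a,z)$ combined with Lemma~\ref{l-tau_mu} then gives the desired estimate.

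In the second case, $b > Q^{\varepsilon}$, the crude bound fails because $\tau(b)$ alone can be as large as $\exp\{c\, m\}$ for some positive constant $c$. Here one must use the joint constraint $d_a d_b \le z$ directly: for each $d_b\mid b$ with $d_b \asymp Q^{\sigma}$, Lemma~\ref{l-tau_mu} gives $\tau(a, z/d_b) \lesssim \exp\{H(\delta-\sigma)\,m + o(m)\}$, which decreases as $\sigma$ grows. Splitting the sum dyadically by $\sigma\in[0,\delta]$ and bounding the number of divisors of $b$ at each scale using Lemma~\ref{l-3} (the radical of any divisor of $b$ inherits the bound $\le Q^{1/2}$) converts the sum into an integral whose leading contribution recovers $\exp\{(H(\delta')+o(1))\,m\}$, where $H(x) = x\log(1/x) + (1-x)\log(1/(1-x))$.

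The main obstacle is executing this second case: the weighted sum must be bounded by $\exp\{(H(\delta')+o(1))\,m\}$ through a delicate cancellation between the decreasing exponent $H(\delta-\sigma)$ and the growing number of divisors of $b$ of size $\asymp Q^{\sigma}$. The structural input that saves the argument is that $\omega(b)\le(\tfrac12+o(1))\,m$ for any powerful $b\le Q$ (from the Prime Number Theorem applied to $\mathrm{rad}(b)$), which tightly controls the density of divisors of $b$ at each scale.
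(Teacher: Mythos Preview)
Your decomposition $n=ab$ with $a$ squarefree and $b$ powerful is natural, and Case~1 is fine. The gap is Case~2, which you only sketch. It is clearest when $n$ is itself powerful, say $n=(p_1\cdots p_k)^2$ with $p_1\cdots p_k\approx\sqrt{Q}$: then $a=1$, your identity collapses to $\tau(n,z)=\tau(b,z)$, and you are back to the original problem for the powerful part. The tools you invoke do not close it. Lemma~\ref{l-3} is useful only when the radical is $\le Q^{\varepsilon}$ with $\varepsilon\to 0$; with $\mathrm{rad}(b)\le Q^{1/2}$ the constant it produces (via Lemma~\ref{l-2}) is about $\log\tfrac32+\tfrac12\log 3\approx 0.95$, larger than the target $H(\tfrac12)=\log 2\approx 0.69$. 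And while $\omega(b)\le(\tfrac12+o(1))m$ is true, a small divisor of a powerful $b$ is not determined by a small \emph{subset} of its primes, since exponents may exceed~$1$, so the binomial count behind Lemma~\ref{l-tau_mu} does not transfer. Your dyadic sum-to-integral step asserts the needed cancellation without supplying it.

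The paper's cure is a finer decomposition: iterate the radical to write $n=n_1\cdots n_s m$ with each $n_i$ squarefree and $>Q^{\varepsilon}$, and $\mathrm{rad}(m)\le Q^{\varepsilon}$ (so $s\le 1/\varepsilon$). Now every factor carrying the size constraint is squarefree, Lemma~\ref{l-tau_mu} applies to each $n_i$, and after localising each $d_i\mid n_i$ to one of $K$ dyadic windows the product of binomial bounds is collapsed by the Vandermonde inequality $\prod_i\binom{m_i}{\alpha_i m_i}\le\binom{\sum_i m_i}{\sum_i\alpha_i m_i}$ into the single entropy exponent $H(\delta')$. Your squarefree/powerful split leaves one non-squarefree factor, which is exactly where the argument stalls; iterating the radical is what dissolves it.
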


\begin{proof}
We may assume that $\delta < \frac{1}{2}$,  as in the opposite situation this Lemma easily follows from the general estimate for $\tau(n)$. 
Let $\varepsilon>0$ be small fixed real number and $K$ be fixed large integer.

The proof consists of several steps and we begin with the first one.

\textbf{Step 1.}

We show that there exists a presentation of $n$ in the following form 
$$n=n_{1}\ldots n_{s}m,$$ where $\mu(n_{i})\neq 0,n_{i}>Q^{\varepsilon}, rad (m)\leq Q^{\varepsilon}$ (We allow the situation with $s=0$, where there are no $n_{i}$ in this presentation.) The argument of the proof is a sort of an algorithm. If $rad(n)\leq Q^{\varepsilon}$ then we are done with $n=m$. If $rad(n)>Q^{\varepsilon}$ then $n=rad(n) \frac{n}{rad(n)}$ and we proceed this procedure with  $\frac{n}{rad(n)}$ instead of $n$. It is easy to see that the algorithm will be finished and we get the desired representation.

\textbf{Step 2.}
We can easily get an upper estimate for the number of divisors of $m$. Indeed $m \leq Q,rad (n) \leq Q^{\varepsilon}$. We use Lemma \ref{l-3} and see that $$\tau(n) \lesssim  \exp\Bigl\{(C(\varepsilon)) \frac{\log Q}{{\log{\log Q}}}\Bigl\} ,  Q\rightarrow \infty,$$ where $C(\varepsilon)\longrightarrow 0$ if $\varepsilon\longrightarrow 0.$

\textbf{Step 3.} In this step we introduce some definitions. Let the quantities $\delta_{i}$ be defined from the identities $$n_{i}=Q^{\delta_{i}}.$$ Now we define $Ks$ intervals $\Omega_{i,j}$ $1\leq i \leq s, 1\leq j \leq K$ by setting $$\Omega_{i,j}=[Q^{\delta_{i}\frac{j-1}{K}}, Q^{\delta_{i}\frac{j}{K}}].$$

\textbf{Step 4.}
Any divisor of $n_{1}\ldots n_{s}$ can be presented as $d_{1}\ldots d_{s},$ $d_{i}|n_{i}$. Suppose that for any $1 \leq i \leq s$ we fix the interval $\Omega_{i,j_{i}}$. Now we will obtain upper estimate for the number of vectors $(d_{1}, \ldots , d_{s})$, $d_{i}|n_{i}$ and $d_{i} \in \Omega_{i,j_{i}}$.
Each $d_{i}$ is a divisor of $n_{i}$,  $\mu(n_{i}) \neq 0$,  $d_{i}\leq Q^{\delta_{i}\frac{j}{K}}$. So the number of such $d_{i}$ by Lemma \ref{l-tau_mu} does not exceed  $\lesssim {m_{i}\choose \delta_{i,j_{i}}' m_{i}}$ where $m_{i}=[\frac{ \delta_{i} \log Q}{{\log{\log Q}}}]$ and $\delta_{i,j_{i}}'= \min(\frac{1}{2}, \frac{j}{K})$. Therefore the number of such vectors $(d_{1}\ldots d_{s})$ is bounded by the product 

$$\lesssim \prod_{1 \leq i\leq s} {m_{i}\choose \delta_{i,j_{i}}' m_{i}} \leq {\sum_{i}m_{i}\choose \sum_{i}\delta_{i,j_{i}}' m_{i}}.$$

It is easy to see that $\sum_{i}m_{i}\leq \frac{\log Q}{{\log{\log Q}}}$. Next we are going to estimate $\sum_{i}\delta_{i,j_{i}}' m_{i}$.

We see that $$\sum_{i}\delta_{i,j_{i}}' m_{i}\leq \frac{1}{{\log{\log Q}}}\sum_{i}\delta_{i} \delta_{i,j_{i}}' \log Q    \quad (*)$$

Now we estimate each term in the last sum.

We have $$\log d_{i} \in [\delta_{i}\frac{j-1}{K} \log Q, \delta_{i}\frac{j}{K} \log Q]$$ and 
$$\delta_{i, j_{i}}' \leq \frac{j_{i}}{K}.$$ So we can write $$\delta_{i,j_{i}}' \delta_i \log Q \leq \delta_{i} \frac{j_{i}}{K} \log Q \leq \log d_{i} + \frac{\delta_{i}}{K} \log Q.$$  

Inserting this inequality to the expression (*) we obtain 

$$\sum_{i}\delta_{i,j_{i}}' m_{i}\leq \frac{\delta \log Q}{{\log{\log Q}}} + \frac{\delta \log Q}{K {\log{\log Q}}}$$

Finally we have that the number of such vectors $(d_{1},\ldots , d_{s})$ such that $d_{i}|n_{i}, d_{i} \in \Omega_{i,j}$ (the sets $\Omega_{i,j}$ are fixed) is bounded by $$\lesssim {M\choose \alpha M},$$ where

$$M=\frac{\log Q}{{\log{\log Q}}},\alpha = \min( \frac{1}{2}, \delta(1+\frac{1}{K})).$$ 

\textbf{Step 5.} Now we obtain an upper bound for the number of different choices of the intervals $\Omega_{i,j}$. This number does not exceed $K^{s}$ and is some bounded constant, (which does not depends on $Q$). Our Lemma now follows if one uses statements of Steps 2,4,5 and takes sufficiently large constant $K$ and sufficiently small $\varepsilon$.    

% This Lemma can be proved  using Lemma \ref{l-tau_mu} and Lemma \ref{l-3} together with some  combinatoric arguments.

 %Consider any divisor $d$ of $n$ and its prime decomposition: $d=p_{1}\ldots p_{s}$. It is easy to see, that $s\leq \frac{\delta + o(1) \log Q}{{\log{\log Q}}}, Q\rightarrow \infty.$ The number $n$ has at most $\frac{ (1+o(1)) \log Q}{{\log{\log Q}}}, Q\rightarrow \infty$ different prime divisors. Doing some  computations together with the asymptotic expression for binomial coefficient -- $${m\choose \delta m}\sim \exp\Bigl\{(\delta \log (\frac{1}{\delta}) + (1-\delta) \log (\frac{1}{1-\delta})+o(1)) m \Bigl\} ,  m\rightarrow \infty$$ we get the desired bound.

%Let $n=p_{1}\ldots p_{s}$. As in the proof of Lemma \ref{l-3}, consider the map on the set of divisors of $n$ $$\pi: p_{1}\ldots p_{s} \rightarrow  p_{(1)}\ldots p_{(s)},$$ 
%where $p_{(i)}$ - is the $i$ - ordered prime number,--  $$p_{(1)}=2, p_{(2)}=3, p_{(3)}=5,...$$ By Prime Number Theorem if $n\leq Q $ then $$p_{(s)} < (1+o(1)) \log Q, Q\rightarrow \infty.$$ If $d|n, d\leq z$ then $\pi (d)\leq z$ and $$
\end{proof}

\begin{lemma} \label{l-simple}
For any integer $n>1$ we have $\frac{\log \tau(n^{2})}{\log \tau(n)}\leq \frac{\log 3}{\log 2}.$

\end{lemma}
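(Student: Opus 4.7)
The plan is to reduce to a single prime-power inequality via multiplicativity of $\tau$. Writing the prime factorisation $n = p_1^{a_1}\cdots p_s^{a_s}$ with $s \geq 1$ and every $a_i \geq 1$, we have $\tau(n) = \prod_i (a_i+1)$ and $\tau(n^2) = \prod_i (2a_i+1)$. Taking logarithms,
$$\frac{\log \tau(n^2)}{\log \tau(n)} = \frac{\sum_i \log(2a_i+1)}{\sum_i \log(a_i+1)},$$
which is a convex combination of the ratios $r(a_i) := \log(2a_i+1)/\log(a_i+1)$ with positive weights $\log(a_i+1)$. Consequently it is bounded above by $\max_i r(a_i)$, so it suffices to prove that for every integer $a \geq 1$ one has $r(a) \leq \log 3/\log 2$.

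For that pointwise step, I would set $g(a) = \log 3 \cdot \log(a+1) - \log 2 \cdot \log(2a+1)$, so the desired inequality becomes $g(a) \geq 0$. A direct computation gives $g(1) = 0$; this incidentally explains why the bound in the lemma is sharp, equality being attained for squarefree $n$. Differentiating,
$$g'(a) = \frac{\log 3}{a+1} - \frac{2\log 2}{2a+1},$$
and rearranging, $g'(a) \geq 0$ is equivalent to $2a\log(3/2) \geq \log(4/3)$, which clearly holds for every real $a \geq 1$. Therefore $g$ is non-decreasing on $[1,\infty)$, so $g(a) \geq g(1) = 0$ there, which gives the pointwise bound.

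Combining the two steps yields the lemma. The argument is entirely elementary and I do not expect any serious obstacle; the only subtle point is that the weighted-average reduction relies on the positivity $\log(a_i+1) > 0$, which is exactly why the hypothesis $n > 1$ is required.
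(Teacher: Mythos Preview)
Your proof is correct and follows the same initial reduction as the paper: write $n=\prod p_i^{a_i}$ and express $\log\tau(n^2)/\log\tau(n)$ as the ratio $\sum_i\log(2a_i+1)\big/\sum_i\log(a_i+1)$. The paper then simply asserts that this ratio is at most $\log 3/\log 2$, whereas you supply the missing justification via the mediant (weighted-average) inequality and a clean calculus check of the pointwise bound $\log(2a+1)/\log(a+1)\le\log 3/\log 2$; your observation that equality holds at $a=1$ (hence for squarefree $n$) is a nice touch confirming sharpness.
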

 
\begin{proof}
Let $n=p_{1}^{\beta_{1}}\ldots p_{l}^{\beta_{l}}$, then 
$$\frac{\log \tau(n^{2})}{\log \tau(n)}=\frac{\log(1+2\beta_{1}) + \ldots + \log(1+2\beta_{l})}{\log(1+\beta_{1}) + \ldots + \log(1+\beta_{l})}.$$ The last expression is always less than $\frac{\log 3}{\log 2}$. With that we finish the proof of this lemma.
 
\end{proof} 
Next, we introduce some notations.  Let $n \in \mathbb{N}$ and let $l(n)$ denotes the maximal positive integer $m$ such that $m^{2}|n$.
%consider its prime multiplicative  decomposition $n=p_{1}^{\alpha_{1}}\ldots p_{s}^{\alpha_{s}}$. Let 

%$$l_{1}(n)= \prod_{1\leq i \leq s, \alpha_{i}=1}p_{i},$$ 
%and 

%$$l_{2}(n)=\frac{n}{l_{1}(n)}.$$

We are going to prove the following lemma.

\begin{lemma} \label{l-4}

Let  $n$ be positive integer, $n\leq Q^{2}$ and the quantity  $c$ is defined  from the equation
$$\tau(n)=\exp \Bigl\{ (2 \log 2  - c ) \frac{  \log Q}{\log\log Q} \Bigr\}. $$ Then there we have

$$l(n)  \leq Q^{\delta(c) + o(1)}, Q\rightarrow \infty, $$ where
$$\delta(c) \leq \frac{ c}{2  \log 2 - \log 3} .$$

%2)$C(\varepsilon)\longrightarrow 0  \quad \textit{if}  \quad \varepsilon \longrightarrow 0.$

\end{lemma}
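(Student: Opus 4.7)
The plan is to decompose $n$ by its largest square divisor: write $n = L^2 m$ with $L = l(n)$ and $m = n/L^2$. The key preliminary observation is that $m$ must be squarefree, for if any prime $p$ satisfied $p^2 \mid m$ then $(pL)^2 \mid n$, contradicting the maximality of $L$. Setting $L = Q^\delta$, the task reduces to proving $\delta \leq c/(2\log 2 - \log 3) + o(1)$.

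From the submultiplicativity $\tau(n) \leq \tau(L^2)\tau(m)$ I would bound the two factors separately. For $\tau(m)$: since $m$ is squarefree with $m \leq Q^{2-2\delta}$, the identity $\tau(m) = 2^{\omega(m)}$ together with $\omega(m) \leq (1+o(1))\log m/\log\log m$ yields
\[
\log \tau(m) \leq \bigl((2-2\delta)\log 2 + o(1)\bigr)\frac{\log Q}{\log\log Q}.
\]
For $\tau(L^2)$: the standard divisor bound gives $\log \tau(L) \leq (\log 2 + o(1))\delta \log Q/\log\log Q$, and Lemma~\ref{l-simple} upgrades this to
\[
\log \tau(L^2) \leq \tfrac{\log 3}{\log 2}\,\log \tau(L) \leq (\log 3 + o(1))\,\delta\,\frac{\log Q}{\log\log Q}.
\]

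Adding the two estimates produces
\[
\log \tau(n) \leq \bigl(2\log 2 - (2\log 2 - \log 3)\delta + o(1)\bigr)\frac{\log Q}{\log\log Q},
\]
and comparing with $\log \tau(n) = (2\log 2 - c)\log Q/\log\log Q$ gives $(2\log 2 - \log 3)\delta \leq c + o(1)$, which is exactly the desired bound. I do not expect a genuine obstacle: once the squarefree decomposition is in place the calculation is routine, and the boundary case $L = O(1)$ is automatic since then $\tau(L^2) = O(1)$. The only place where nontrivial content enters is the invocation of Lemma~\ref{l-simple}: the naive bound $\tau(L^2) \leq \tau(L)^2$ would yield $2\log 2$ in place of $\log 3$ and thereby collapse the estimate to the trivial $\log \tau(n) \leq (2\log 2 + o(1))\log Q/\log\log Q$. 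The improvement $\log 3 < 2\log 2$ is precisely what makes the stated bound on $\delta(c)$ nontrivial.
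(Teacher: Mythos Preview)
Your proof is correct and follows essentially the same route as the paper's: decompose $n = l(n)^2 \cdot (n/l(n)^2)$, use submultiplicativity of $\tau$, bound $\tau(l(n)^2)$ via Lemma~\ref{l-simple} combined with the standard divisor bound for $\tau(l(n))$, bound the cofactor via the standard divisor bound, and compare coefficients. Your extra observation that $m = n/l(n)^2$ is squarefree (allowing $\tau(m)=2^{\omega(m)}$) is a nice touch but not essential---the paper simply applies the general $\tau$-bound to $m$ and arrives at the same inequality.
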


\begin{proof}
 % Than there exists an absolute value $\delta>0$, such that 
  %there is the sequence  of integers $n$, and the following three conditions take place
 
 %$$n\leq Q,$$
  %$$l_{2}(n)> Q^{\delta + o(1)},$$
 %$$\tau(n)>\exp \Bigl\{ ( \log 2 + o(1)) \frac{  \log Q}{\log\log Q} \Bigr\}, Q\rightarrow \infty.$$ 
Let the quantity $\delta$ is defined from the equality $l(n)=Q^{\delta}$. We also may assume that $$\log \log l(n)= (1+o(1)) \log \log Q,$$ as in the opposite situation the Lemma \ref{l-4} is true.
%Let $l_{2}(n)=p_{1}^{\alpha_{1}}\ldots p_{k}^{\alpha_{k}}$. 
%Next, we can present  $l_{2}(n)$ as a product
%$$l_{2}(n)=m_{1}m_{2}^{2},$$ where  $\mu(m_{1})\neq 0$.

%Hence for some $1 \leq i \leq 2$ we have  $m_{i}\geq Q^{\delta/3 }$.
%We see that $m_{i}, m_{i}^{2}| n$. 
Using Lemma \ref{l-simple} and upper  estimate for $\tau(l(n))$ we conclude that 
$$\tau(l^{2}(n)) \leq \exp \Bigl\{ ( \log 3 + o(1)   ) \frac{ \delta \log Q}{\log\log Q} \Bigr\}, Q\rightarrow \infty.$$

We see that $l(n)^{2}|n$ and  we can write 

$$\exp \Bigl\{ (2 \log 2  - c ) \frac{  \log Q}{\log\log Q} \Bigr\}= \tau(n) \leq \tau(l(n)^{2}) \tau (\frac{n}{l(n)^{2}}).$$

%$$\leq $$
It is easy to see that the last expression does not exceed 
$$
\exp \Bigl\{ \bigl( {\delta \log 3} + (2- 2 \delta) \log 2 +o(1) \bigr)  \frac{  \log Q}{\log\log Q} \Bigr\}.
$$
  
Comparing this quantity with the left-side expression in the last inequality and doing some easy computations we obtain the desired estimate for $\delta$. With that we finish the proof of Lemma \ref{l-4}. 
%As 
%$\tau(n)>\exp \Bigl\{ ( \log 2 + o(1)) \frac{  \log Q}{\log\log Q} \Bigr\}, Q\rightarrow \infty,$
%
%$m_{i}, m_{i}^{2}| n$  
% $m_{i}\geq Q^{\delta/3 }$ we must have

%$$\tau(m_{i}^{2})>\exp \Bigl\{ ( 2 \log 2 + o(1)) \frac{  \log m_{i}}{\log\log Q} \Bigr\}, Q\rightarrow \infty,$$
%$$\tau(m_{i})>\exp \Bigl\{ ( \log 2 + o(1)) \frac{  \log m_{i}}{\log\log Q} \Bigr\}, Q\rightarrow \infty.$$

%Hence $$\frac{\log \tau(m_{i}^{2})}{\log \tau(m_{i})} \rightarrow 2,  Q\rightarrow \infty.$$
%On the other side $m_{i}=p_{1}^{\beta_{1}}\ldots p_{l}^{\beta_{l}}$, then

%$$\frac{\log \tau(m_{i}^{2})}{\log \tau(m_{i})}=\frac{\log(1+2\beta_{1}) + \ldots + \log(1+2\beta_{l})}{\log(1+\beta_{1}) + \ldots + \log(1+\beta_{l})}.$$
%The last expression is always less than $2-C$, where $C>0$ is some absolute constant.
%This contradiction completes the proof. 

\end{proof}
Now we are ready to prove Theorem \ref{t-1} and we are going to the next section.

%Saying roughly, we will link different chains with the help of Lemma \ref{l-3.3}. So we can get the chain which is 'longer' than the initial. This is the main new ingridient for the proof of Theorem \ref{t-3.2}.

\section{The proof of Theorem \ref{t-1}}
\begin{proof}
%Let parameter $\varepsilon>0$ be fixed, small and  its value will  be chosen later.
%We consider two different cases. 
Let the quantity $c$ be defined from the equality 
$$E(A,B)= |A||B| \exp \Bigl\{ ( 2 \log 2  - c ) \frac{  \log Q}{\log\log Q} \Bigr\}. $$
From the inequality \ref{shn} we see that 
$$|A/B| \geq  |A||B| \exp \Bigl\{ (- 2 \log 2  + c ) \frac{  \log Q}{\log\log Q} \Bigr\}.$$

Our next step is to find another lower bound for  $|A/B|$, -- this inequality will work well in the case of small $c$.

%\textbf{Case 1.} Suppose that there exists an absolute constant $c>0$ such that
 % $$E(A)< |A|^{2} \exp
  %\Bigl\{ (2 \log 2 - c + o(1))
   %\frac{\log Q}{\log\log Q} \Bigr\}, 
   %Q\rightarrow \infty.$$ Then we are done and the Theorem is proved. 

%\textbf{Case 2.}
 
%Suppose that there are no such fixed $c>0$. In this case  we  may assume that 

%$$E(A)> |A|^{2} \exp
 % \Bigl\{ (2 \log 2 + o(1))
  % \frac{\log Q}{\log\log Q} \Bigr\}, 
   %Q\rightarrow \infty.$$
   
   Let us denote the quantity $L$ from the identity $E(A,B)=|A||B| L$, and let $$r_{A,B}(z)=|\{(a_{1},b_{1}) \in A \times B : a_{1}b_{1}= z\}|.$$
   
   Define the set $$M_{1}= \{z \in AB: r_{A,B}(z)\leq L/2\}$$ and $$M_{2}=AB \setminus M_{1}.$$

We see that $$\sum_{z \in M_{1}}r_{A,B}^{2}(z)\leq |A||B| L/2;$$
and so $$\sum_{z \in M_{2}}r_{A,B}^{2}(z)\geq |A||B| L/2.$$

For integer $i\geq 0$ let  $$M_{2,i}=\{z \in AB: r_{A,B}(z) \in (2^{i-1}L, 2^{i}L] \}.$$
We see that $$M_{2 }= \bigcup_{0 \leq i \ll \frac{\log Q}{\log \log Q}} M_{2,i}.$$

Hence by pigeonhole principle there exists $0 \leq i \ll \frac{\log Q}{\log \log Q}$ such that 

$$\sum_{z \in M_{2,i}}r_{A,B}^{2}(z)\geq |A||B| \exp \Bigl\{ (2 \log 2 - c + o(1))
   \frac{\log Q}{\log\log Q} \Bigr\}.$$  Let us fix such $i$ and let the quantity $c'$ be defined from the identity $$2^{i}L= \exp \Bigl\{ (2 \log 2 - c' )
   \frac{\log Q}{\log\log Q} \Bigr\}.$$

It is easy to see that $c' \in [o(1), c]$.

Next we  will show that $$\sum_{z \in M_{2,i}}r_{A,B}(z)  \geq |A||B| \exp
  \Bigl\{ (c' -c + o(1))
   \frac{\log Q}{\log\log Q} \Bigr\}, 
   Q\rightarrow \infty.$$   
   
Indeed, $$|A||B| \exp
  \Bigl\{ ( 2 \log 2 -c + o(1))
   \frac{\log Q}{\log\log Q} \Bigr\}\leq \sum_{z \in M_{2,i}}r_{A,B}^{2}(z) \leq \max_{z \in M_{2,i}} r_{A,B}(z) \sum_{z \in M_{2,i}}r_{A,B}(z).$$
 
The quantity  $\max_{z\in M_{2,i}} r_{A,B}(z)$ is  less than $\exp \Bigl\{ (2 \log 2 - c' )
   \frac{\log Q}{\log\log Q} \Bigr\}$. So  inserting this bound to the previous inequality we get the desired estimate.

Next we consider the set $G$:
$$G= \{(a_{1},b_{1}) \in A \times B: a_{1}b_{1} \in M_{2,i}\}.$$

From the previous estimate $|G|>  |A||B| \exp
  \Bigl\{ ( c'-c + o(1))
   \frac{\log Q}{\log\log Q} \Bigr\}, 
   Q\rightarrow \infty.$

Next we consider the following set $$W=\Bigl\{\frac{a_{1}}{b_{1}}: (a_{1}, b_{1}) \in G \Bigr\},$$ and will show that  $|W|$ is large.
%> |A|^{2} \exp
 % \Bigl\{ ( o(1))
 %  \frac{\log Q}{\log\log Q} \Bigr\}, 
 %  Q\rightarrow \infty.$ This will complete the proof. 
   
For every element $z \in M_{2,i}$ we use Lemma \ref{l-4} and see that $$l(z)= Q^{\delta(c') + o(1)}, Q\longrightarrow \infty, $$ where $$\delta(c') \leq \frac{c'}{2  \log 2 - \log 3}.$$ 

This means that  for every pair $(a_{1},b_{1}) \in G$ $\gcd(a_{1},b_{1}) \leq Q^{\delta(c') + o(1)}.$

Define $$r_{A/B, G}(z)= \Bigl\{ (a_{1},b_{1}) \in G: \frac{a_{1}}{b_{1}} =z \Bigr\}.$$

We can write $$|A||B|\exp \Bigl\{ (c' - c +  o(1))
   \frac{\log Q}{\log\log Q} \Bigr\}= |G| = \sum_{z}r_{A/B, G}(z) \leq $$ 
   
   $$\leq  |W|^{1/2} \Bigl\{ \sum_{z} r_{A/B, G}^{2}(z) \Bigr\}^{1/2}.$$
   
   Our aim is to obtain good upper estimate for $$\sigma= \sum_{z} r_{A/B, G}^{2}(z).$$

   The $\sigma$ does not exceed the number of solution to the equation $$\frac{a_{1}}{b_{1}}=\frac{a_{2}}{b_{2}}, a_{i} \in A; b_{i} \in B \gcd(a_{1},b_{1}), \gcd(a_{2},b_{2}) \leq Q^{\delta(c') + o(1)}. $$

We may write $$a_{1} = tu, b_{1}=tv, a_{2}=su, b_{2}=sv ;$$
 where $$\gcd(u,v)=1 \quad  \textit{and }  \quad t,s \leq  Q^{\delta(c') +o(1)}.$$ 
 
Let us fix $a_{1}$ and $b_{2}$. If for these $a_{1}$ and $b_{2}$ we choose  $t$ and $s$ we then identify  $a_{2}$ and $a_{3}$. For any fixed $a_{1},b_{2}$ the parameters $t,s$ are the divisors of $a_{1},b_{2}$ respectively. These $t,s$ do not exceed $Q^{\delta(c') + o(1)}.$  Using Lemma \ref{l-tau_nemu}  we see that the number of different pairs  $t,s$ do not exceed 

$$ \exp\Bigl\{(2 \delta(c') \log (\frac{1}{\delta(c')}) + 2 (1-\delta(c')) \log (\frac{1}{1-\delta(c')})+o(1)) \frac{\log Q}{{\log{\log Q}}}\Bigl\} ,  Q\rightarrow \infty.$$
We will just write $\delta_{c'}$ instead of $\delta(c')$.
%Using Lemma \ref{l-2}, we see that the last expression does not exceed

%$$ \exp
 % \Bigl\{ (2 \delta_{c'} \log (1 + \frac{1}{\delta_{c'}}) + 2 \log (1 + \delta_{c'}) +o(1)) 
  % \frac{\log Q}{\log\log Q} \Bigr\}, 
  % Q\rightarrow \infty.$$
   
 And so we conclude that $$\sigma < |A||B|\exp\Bigl\{(2 \delta_{c'} \log (\frac{1}{\delta_{c'}}) + 2 (1-\delta_{c'}) \log (\frac{1}{1-\delta_{c'}})+o(1)) \frac{\log Q}{{\log{\log Q}}}\Bigl\} ,  Q\rightarrow \infty.$$
   
So we can obtain the lower bound for $|W|$:    
   
$$|W|\geq |A||B| \exp
  \Bigl\{ (2c'-2c -  2 \delta_{c'} \log (\frac{1}{\delta_{c'}}) - 2 (1-\delta_{c'}) \log (\frac{1}{1-\delta_{c'}})+o(1)) 
    \frac{\log Q}{\log\log Q} \Bigr\}, 
   Q\rightarrow \infty.$$ 

Recall that $c' \in [o(1), c]$.

We may assume $c\leq 0.11$. It is easy to see that the expression $$2c' - 2 \delta_{c'} \log (\frac{1}{\delta_{c'}}) - 2 (1-\delta_{c'}) \log (\frac{1}{1-\delta_{c'}})$$ takes the smallest value if  $c'=c$.

So, we can rewrite the last estimate     
$$|W|\geq |A||B| \exp
  \Bigl\{ ( - 2 \delta_{c} \log (\frac{1}{\delta_{c}}) - 2 (1-\delta_{c}) \log (\frac{1}{1-\delta_{c}}) + o(1))     \frac{\log Q}{\log\log Q} \Bigr\}, 
   Q\rightarrow \infty,$$
   
   where $\delta_{c}= \delta (c)$. 
 
As it was noted before there is trivial estimate
 $$|A/B| \geq  |A||B| \exp \Bigl\{ (- 2 \log 2  + c ) \frac{  \log Q}{\log\log Q} \Bigr\}.$$
 
We have these two estimates, one work well with small $c$, another work well with large $c$.  

It is easy to see that the explicit absolute constant $\gamma>0$ can be taken as the solution of the following equation

$$-2 \log 2 +c =  - 2 \delta_{c} \log (\frac{1}{\delta_{c}}) - 2 (1-\delta_{c}) \log (\frac{1}{1-\delta_{c}}),$$ 
where $\delta_{c}=\frac{c}{2\log2-\log3}$.

Computer calculations show that the solution is equal to $0.098....$, so one can this value for the $\gamma$.
  With that we finish the proof of Theorem \ref{t-1}.
\end{proof}

\section{Final remarks}

One can easily deduce the following corollary, which follows from the proof of Theorem~\ref{t-1}.

\begin{corollary}
Let $A,B \subseteq [1,Q]$ and  $E(A,B)= |A||B|\exp\Bigl\{( 2 \log2 + o(1)) \frac{\log Q}{{\log{\log Q}}}\Bigl\}.$ Then we have $$|A/B| = |A||B|\exp\Bigl\{(  o(1)) \frac{\log Q}{{\log{\log Q}}}\Bigl\} ,  Q\rightarrow \infty. $$
In particular if $|AB| = |A||B|\exp\Bigl\{(-2 \log 2 +  o(1)) \frac{\log Q}{{\log{\log Q}}}\Bigl\} ,  Q\rightarrow \infty, $ then  $$|A/B| = |A||B|\exp\Bigl\{(  o(1)) \frac{\log Q}{{\log{\log Q}}}\Bigl\} ,  Q\rightarrow \infty. $$
\end{corollary}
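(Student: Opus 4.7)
The plan is to observe that the corollary is the $c\to 0$ specialization of the argument in the proof of Theorem \ref{t-1}, combined with the trivial upper bound $|A/B|\leq |A||B|$.

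For the main statement, I would first write the hypothesis in the notation used in the proof of Theorem \ref{t-1}, namely
$$E(A,B)=|A||B|\exp\Bigl\{(2\log 2 -c)\frac{\log Q}{\log\log Q}\Bigr\},$$
which forces $c=o(1)$. Re-running the construction of the auxiliary set $W\subseteq A/B$ in Theorem \ref{t-1} verbatim yields
$$|W|\geq |A||B|\exp\Bigl\{\bigl(-2\delta_{c}\log(1/\delta_{c}) - 2(1-\delta_{c})\log(1/(1-\delta_{c})) + o(1)\bigr)\frac{\log Q}{\log\log Q}\Bigr\},$$
where $\delta_c\leq c/(2\log 2-\log 3)$. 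Since $c=o(1)$, also $\delta_c=o(1)$, and both $\delta_c\log(1/\delta_c)$ and $(1-\delta_c)\log(1/(1-\delta_c))$ tend to $0$. Hence the exponent on the right is $o(1)\cdot \log Q/\log\log Q$, and $|A/B|\geq|W|\geq |A||B|\exp\{o(1)\log Q/\log\log Q\}$. Together with the trivial bound $|A/B|\leq |A||B|$, the claimed two-sided equality follows.

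For the parenthetical second claim, I would reduce it to the first by deducing that $E(A,B)=|A||B|\exp\{(2\log 2+o(1))\log Q/\log\log Q\}$ from the hypothesis on $|AB|$. Shnirel'man's inequality \eqref{shn} gives
$$E(A,B)\geq\frac{|A|^2|B|^2}{|AB|}=|A||B|\exp\Bigl\{(2\log 2+o(1))\frac{\log Q}{\log\log Q}\Bigr\}.$$
For the matching upper bound I would use $E(A,B)\leq(\max_z r_{A,B}(z))\cdot|A||B|$ together with $r_{A,B}(z)\leq\tau(z)$ for any product $z\leq Q^2$; the standard divisor bound then yields $\max_z r_{A,B}(z)\leq\exp\{(2\log 2+o(1))\log Q/\log\log Q\}$. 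Thus the hypothesis of the first part of the corollary is satisfied.

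There is no substantive obstacle; the corollary is essentially a repackaging of estimates already established in Theorem \ref{t-1}. The only point worth verifying is that the optimization over $c'\in[o(1),c]$ performed in Theorem \ref{t-1} still yields an $o(1)$ exponent when $c=o(1)$, which is immediate because $c'$ and $\delta_{c'}$ are then both $o(1)$, so every term in the resulting exponent vanishes.
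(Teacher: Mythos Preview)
Your proposal is correct and follows essentially the same approach as the paper: the paper simply states that the corollary follows from the proof of Theorem~\ref{t-1} and that the hypothesis on $|AB|$ implies the hypothesis on $E(A,B)$, while you spell out the $c=o(1)$ specialization and the Shnirel'man/divisor-bound sandwich for $E(A,B)$ explicitly. Your added remark about the trivial upper bound $|A/B|\leq |A||B|$ and the check on the $c'$-optimization are details the paper leaves implicit.
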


Indeed, the  the condition $|AB| = |A||B|\exp\Bigl\{(-2 \log 2 +  o(1)) \frac{\log Q}{{\log{\log Q}}}\Bigl\} $ imply $E(A,B)=|A||B|\exp\Bigl\{( 2 \log2 + o(1)) \frac{\log Q}{{\log{\log Q}}}\Bigl\}$. 

It seems that using more precise arguments for finding pairs $(a,b) \in A \times B $ for the set $G$ with smaller $\gcd(a,b)$ may lead to a better coefficient instead of $0.098....$

\section*{Acknowledgements} 

The work   is supported by the Russian Science Foundation under grant 14-50-00005.  
I wish  to thank  Sergei Konyagin for  valuable comments, advices and attention to this work.

Steklov Institute of Mathematics, Russian Academy of Science,  yuriisht@yandex.ru

\end{document}